\def\sqr#1#2{{\vcenter{\vbox{\hrule height.#2pt
				\hbox{\vrule width.#2pt height#1pt \kern#1pt \vrule width.#2pt}
				\hrule height.#2pt}}}}
\def\5n{\negthinspace \negthinspace \negthinspace \negthinspace \negthinspace }
\def\4n{\negthinspace \negthinspace \negthinspace \negthinspace }
\def\3n{\negthinspace \negthinspace \negthinspace }
\def\2n{\negthinspace \negthinspace }
\def\1n{\negthinspace }
\def\dbE{\mathbb{E}}
\def\dbN{\mathbb{N}}
\def\dbR{\mathbb{R}}
\def\dbT{\mathbb{T}}
\def\sM{\mathscr{M}}
\def\cF{{\cal F}}
\def\cM{{\cal M}}
\def\cP{{\cal P}}
\def\cQ{{\cal Q}}
\def\cU{{\cal U}}
\def\BA{{\bf A}}
\def\BI{{\bf I}}
\def\BJ{{\bf J}}
\def\BP{{\bf P}}
\def\BV{{\bf V}}
\def\ds{\displaystyle}
\def\ns{\noalign{\ss}}
\def\ms{\medskip}
\def\q{\quad}
\def\qq{\qquad}
\def\({\Big (}
\def\){\Big )}
\def\[{\Big[}
\def\]{\Big]}
\def\d{\delta}
\def\e{\varepsilon}
\def\k{\kappa}
\def\t{\tau}
\def\D{\Delta}
\def\bde{\begin{definition}\label}
	\def\ede{\end{definition}}
\def\be{\begin{equation}}
\def\bel{\begin{equation}\label}
\def\ee{\end{equation}}
\def\bt{\begin{theorem}\label}
	\def\et{\end{theorem}}
\def\bc{\begin{corollary}\label}
	\def\ec{\end{corollary}}
\def\bl{\begin{lemma}\label}
	\def\el{\end{lemma}}
\def\bp{\begin{proposition}\label}
	\def\ep{\end{proposition}}
\def\bas{\begin{assumption}\label}
	\def\eas{\end{assumption}}
\def\br{\begin{remark}\label}
	\def\er{\end{remark}}
\def\bex{\begin{example}\label}
	\def\ex{\end{example}}
\def\ba{\begin{array}}
	\def\ea{\end{array}}
\def\ben{\begin{enumerate}}
	\def\een{\end{enumerate}}
\def\square#1{\vbox{\hrule\hbox{\vrule height#1%
			\kern#1\vrule}\hrule}}
\def\rectangle#1#2{\vbox{\hrule\hbox{\vrule height#1%
			\kern#2\vrule}\hrule}}
\font\tenbb=msbm10 \font\sevenbb=msbm7 \font\fivebb=msbm5
\newtheorem{theorem}{\indent Theorem}[section]
\newtheorem{definition}[theorem]{\indent Definition}
\newtheorem{proposition}[theorem]{\indent Proposition}
\newtheorem{corollary}[theorem]{\indent Corollary}
\newtheorem{lemma}[theorem]{\indent Lemma}
\newtheorem{remark}[theorem]{\indent Remark}
\newtheorem{example}[theorem]{\indent Example}
\newtheorem{assumption}[theorem]{\indent Assumption}
\def\bea{\begin{equation*}}
\def\eea{\end{equation*}}
\def\bel{\begin{equation}\label}
\def\eel{\end{equation}}
\def\ba{\begin{array}}
	\def\ea{\end{array}}
\newcommand{\ad}{&\!\!\!\displaystyle}
\def\({\Big (}
\def\){\Big )}
\def\[{\Big[}
\def\]{\Big]}
\def\q{\quad}
\def\qq{\qquad}
\def\d{\delta}
\def\e{\varepsilon}
\def\ms{\vspace{0.2cm}}
\def\ds{\displaystyle}
\def\ns{\noalign{\smallskip}}
\def\argmin{\mathop{\rm argmin}}
\begin{document}
\title{Time-Inconsistent Problems for Controlled Markov Chains with Distribution-Dependent Costs: Equilibrium Solutions}
\author{Hongwei Mei\thanks{Department of Mathematics, The University of Kansas, Lawrence, KS 66045, U.S. (hongwei.mei@ku.edu).}~~~ and ~~George Yin \thanks{Department of  Mathematics, Wayne State University, Detroit, MI 48202, gyin@wayne.edu. The research of this author was supported in part by the
		Air Force Office of Scientific Research under grant FA9550-18-1-0268.}}
\maketitle

\begin{abstract}
This paper focuses on a class of continuous-time  controlled Markov chains with time-inconsistent and distribution-dependent cost functional (in some appropriate sense). A new definition of time-inconsistent distribution-dependent  equilibrium in closed-loop sense is given and its existence and uniqueness have been established. Because of the time-inconsistency, it is  proved that the equilibrium is locally optimal in an appropriate sense. Moreover, it has been shown that our problem is essentially equivalent to an infinite-player  mean-field game with time-inconsistent cost.
\end{abstract}

\paragraph{Keywords} time-inconsistent, distribution-dependent, mean-field, Markov chain.
\newpage

\section{Introduction}
Because of 
the wide range of applications in existing and emerging 
areas such as
control engineering, biology and ecology, communication systems,
social networks, and
finance and economics, stochastic control problems have been studied extensively 
since last century.
Recently, 
time-inconsistent control problems  have attracted increasing attention.
In contrast to 
a time-consistent problem, for a time-inconsistent problem, it is  impossible  to find an optimal control or strategy based on the dynamic programming principle.
In 
 economics and financial models, there are 
  many reasons for time-inconsistency. For example,  if the cost functional or reward  is non-exponential discounting (see \cite{Yong2012b}) or it is of mean-variance form, the problem is time-inconsistent. 
  It can be seen that 
  time-inconsistency appears in a wide variety of applications; see \cite{Cheb2020,Dodd2008,Green2003,Gren2007} and references therein.  One of the ideas to deal with time-inconsistency is to find a  strategy,  which is locally optimal only. 
  After the breakthrough using equilibria was introduced in \cite{Ekeland2008,Ekeland2010,Ekeland2012,Yong2012b},  many works are published concerning time-inconsistent problems; see the survey paper \cite{Yan2019} and the references therein.  Generally speaking, the strategies can be divided into two categories, open-loop  \cite{Ekeland2010} and  closed-loop \cite{Yong2012b}. In the open-loop setup, the main effort is devoted to deriving a local maximal principle (or variational principle) by  the rule of local optimality. In contrast, in the closed-loop one, through the investigation on an $N$-player game, one can derive a  time-inconsistent Hamilton-Jacobi equation to determine a time-inconsistent strategy which verifies a local optimality \cite{Yong2012b}.  For similar closed-loop problems, time-inconsistent control problems with recursive cost functions are considered in \cite{Wei2017} and  time-inconsistent control problems with switching states are considered in \cite{WeiJ2017,Mei2019}. In the closed-loop problems, one of the difficulties lies in the first-order regularity of the viscosity solution to the Hamilton-Jacobi (HJ for short) equation (or it is required that the HJ equation exists a unique classical solution). Therefore, the stochastic differential equations are assumed to be non-degenerate in the most of previous works.  

In this paper, we consider a  controlled continuous-time Markov chain with time-inconsistent and distribution-dependent cost functional in a closed-loop setup. That is, not only does the cost functional depend on a non-exponential discounting factor, but also depends on the distribution of the Markov chain. Compared to the time-inconsistent distribution-independent problem, the distribution-dependence of the cost functional  brings some new features 
and difficulties.
 For example,  the classical dynamic programming is not applicable
 and the  strategy is not state-dependent only when finding a optimal control. To overcome the difficulties due to the distribution-dependence, the first idea  is to lift the dynamic into the space of probability measures on which one can derive an HJ equation using the Bellman's principle. Then using mass-transport theory, one can prove the existence and uniqueness of the viscosity solutions (e.g., see \cite{Pham2017,Pham2018} for time-consistent problems). While for time-inconsistent problems, it is required that  there exists a unique classical solution of
 the  HJ equation on the space of probability measures to identify the time-inconsistent strategy.  This is an even harder problem to deal with. Thus we present a different approach in this paper.

 An alternative idea to deal with distribution-dependence can be paralleled from the recent developments in the mean-field game theory (see \cite{Larsy2006a,Larsy2006b,Larsy2007a,Larsy2007b,
Huang2006,Huang2007,Huang2007a,Huang2007b}).
   In a mean-field game problem,   
   one considers a backward Hamilton-Jacobi equation coupled with a forward  transport equation on the space of probability measures. Using the fixed-point theory,  one can   derive a mean-field equilibrium, which is  essentially a Nash equilibrium point.  Applying the similar idea to the  control problem  with time-consistent and distribution-dependent cost functional,
the first step is to solve a classical HJ equation  which is concluded from the classical optimal control  of system  given a guiding (fixed) process $\rho(\cdot)$ in the cost functional. A  feedback control can be determined if the HJ equation is ``regular enough''. Then the second step is to verify that  the guiding process $\rho(\cdot)$ coincides with the distribution law of the dynamics  using the feedback control. If the two-step verification is fulfilled, the feedback control is called a mean-field equilibrium. One can see that the mean-field equilibrium is essentially a fixed point using such definition. However we have to mention that  the equilibrium  is not an optimal strategy in general even for the time-consistent cases.

In this paper, we 
considered continuous-time and finite-state controlled   Markov chains  with time-inconsistent distribution-dependent cost functional using a similar idea to mean-field game theory.  We present a new definition for time-inconsistent mean-field equilibrium,  which verifies a local optimality in some appropriate sense.  Then we can prove the existence and uniqueness of the time-inconsistent mean-field equilibrium under some appropriate conditions.  Moreover,  we show  that the time-inconsistent mean-field equilibrium found is equivalent to an equilibrium for a mean-field game of infinite-many equivalent players with time-inconsistent cost functional.   Previous works on time-inconsistent distribution-dependent systems include \cite{Ni2018a,Wang2018,Yong2015}.
 To the best of our knowledge, most of the papers  are concerned with a special class of linear stochastic differential equations with quadratic costs. There are few papers concerned with  the theory on continuous-time  controlled Markov chains with time-inconsistent and distribution-dependent cost. The paper aims to fill this gap.

 The paper is arranged as follows. In Section \ref{sec:pre}, we introduce some notation together with certain preliminary results for controlled Markov Chain. Section \ref{sec:tcp} introduces the time-inconsistent mean-field equilibrium and prove the existence and uniqueness.
 Section \ref{sec:mfg} shows that our problem is equivalent to an infinite-player mean-field game with time-inconsistent cost. Finally some concluding remarks are made in Section \ref{sec:cr}.

\section{Preliminaries}\label{sec:pre}
\subsection{Notation}
 Let $M=\{1,\cdots,m\}$ and $\dbT=[0,T]$. Denote by $\cM$ be the set of all possible functions 
 defined on $M$ equipped with the sup-norm $\Vert\cdot\Vert_\cM$.  Set
$$C([0,T],\cM):=\left\{\theta:\dbT\times M\mapsto \dbR\big|~\theta_t(i)\text{ is continuous w.r.t. } t\right\}$$
and
$$D([0,T],\cM):=\left\{\theta:\dbT\times M\mapsto \dbR\big|~\theta_t(i)\text{ is right-continuous with left-limit w.r.t. } t\right\}.$$
Let $\cP$ be the collection of all probability measures on $M$ equipped with metric $d(\cdot,\cdot)$ defined by
$$d(\rho,\gamma):=\sum_{i=1}^m|\rho(i)-\gamma(i)|.$$
Denote by
 $C([0,T],\cP)$ be the set of all continuous $\cP$-valued curves   on $\dbT$.
Let $U$ be the space of actions equipped with the metric $|\cdot,\cdot|_U$ and $v_0$ be a fixed element in $U$.  Let $\cU$ be the set of all possible maps from $M$ to $U$ equipped with the following metric
$$d_\cU( u,u')=\sup_{i\in M}|u(i),u'(i)|_U.$$

\subsection{Controlled Markov Chain}
In this paper, we consider a finite-state controlled   Markov chain with generator $Q_t^v=[q_t^{v}(i,j)]_{M\times M}$ satisfying
\bel{dynamic}\frac{d\mu_t}{dt}=\mu_tQ_t^v,\q  t\in\dbT, v\in U,\eel
where the state space is $M$ and the action  space is $U$.

 To make sure $Q_t^v$  is a generator of a Markov chain, we define the admissible action set for state $i$ being
$$U_t(i):=\{v\in U:\text{ $q^v_t(i,\cdot)$ is a generator} \},$$
where $q^v_t(i,\cdot)$ is a generator  in the sense that $q_t^v(i,j)\geq 0$ if $j\neq i$ and $\sum_{j=1}^mq_t^v(i,j)=0$. Throughout the paper, we assume that $U_t(i)$ is not empty and measurable  under the topology of $U$ induced by the metric $|\cdot,\cdot|_U$.

Let the set of all admissible strategies on time-interval $[t_0,T]$ be
$$L^1([t_0,T],\cU):=\{\pi\big|\pi:(s,i)\in[t_0,T]\times M\mapsto\pi_s(i)\in U_t(i)\text{ with }\int_{t_0}^T|\pi_s(i),v_0|_U ds<\infty\}.$$
%
Define a subset $D([t_0,T],\cU)\subset L^1([t_0,T],\cU)$ by
$$\ba{ll}\ad D([t_0,T],\cU)\\
\ns\ad\qq:=\{\pi\in L^1([t_0,T],\cU) :\pi_{t}(i)\text{ is right-continuous with left-limit w.r.t. $t$ for each }i\in M   \}.\ea$$
We can  define a map $\phi_t: L^1([0,T],\cU) \mapsto L^1([t,T],\cU) $ by
$$(\phi_t[\pi])_s(i):=\pi_s(i),\q\text{for } i\in M,~t\leq s\leq T.$$

Given any $\pi\in  L^1([0,T],\cU) $, we write $\mu_t^{t_0,\rho,\pi}$ as the solution of
\eqref{dynamic} at time $t$ with initial data $\rho$ at time $t_0$ under the strategy $\pi$.  If the initial data $\rho=\d_x$, i.e., 
the Dirac measure concentrated
on the point $x$, we write  $\mu_t^{t_0,x,\pi}=\mu_t^{t_0,\d_x,\pi}$. Especially the initial time $t_0$ will be omitted if $t_0=0$.
Now we pose
the following assumptions to guarantee the regularity of the dynamic \eqref{dynamic}.

\noindent{\bf Assumption (A)} 

\begin{itemize}
\item[(A1)] The admissible action set $ U_{\cdot}$ is right continuous with left limits. That is, $$\ba{ll}\ds U_t(i)=\lim_{\e\rightarrow 0^+}U_{t+\e}(i)
\text{ and }U_{t-}(i):=\lim_{\e\rightarrow 0^-}U_{t+\e}(i) \text{ exists}.\ea$$

\item[(A2)] $q_{\cdot}^v(i,j)$ is right continuous with left limit in that for any $v_{n}\in U_{t+\e_n}(i)$ with limit $v\in U_{t}(i)$ as $\e_n\rightarrow 0^+$, $$\lim_{n\rightarrow\infty}q_{t+\e_n}^{v_{n}}(i,j)=q_t^v(i,j).$$
For any $v_{n}\in U_{t+\e_n}(i)$ with limit $v\in U_{t-}(i)$ as $\e_n\rightarrow 0^-$, $$q_{t-}^v(i,j):=\lim_{n\rightarrow\infty}q_{t+\e_n}^{v_{n}}(i,j)\text{ exists}.$$

\item[(A3)] There exists  constants $K_1,\kappa_1>0$ such that for any $t\in[0,T]$ and $i,j\in M$,
\bel{qLip}\left\{\ba{ll}\ad|q_t^v(i,j)-q_t^{v'}(i,j)|\leq \kappa_1 |v-v'|_U,\q \text{for }v,v'\in U_t(i);\\ [2mm]
\ns\ad \sup_{j\in M}\sup_{v\in U_t(j)}|q_t^v(i,j)|\leq K_1.\ea\right. \eel
\end{itemize}

\begin{remark}{\rm  Note that $q_t^v(i,j)$ is right continuous with left limit w.r.t. $t$ from (A1) and (A2) and $q_t^v(i,j)$ is Lipschitz with respect to $v$ from (A3).  Thus the existence and uniqueness of the solution of \eqref{dynamic}
given any $\pi\in L^1([0,T],\cU) $ holds directly. 
Note 
that such assumptions 
are stronger than necessary for 
the existence and uniqueness of  the solution. The reason we assume such assumption is to conclude the following proposition 
to be used 
for the control problem.
}\end{remark}

\begin{proposition} Under Assumption {\rm (A)},
	for any $\pi\in  D([0,T],\cM)$,
	$q_{\cdot}^{\pi_{\cdot}(i)}(i,j)$ is right continuous with left limit for any fixed $i,j\in M$.

\end{proposition}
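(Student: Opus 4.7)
The statement to prove is a direct consequence of Assumptions (A1)--(A2), together with the defining property of the Skorokhod-type space $D([0,T],\cU)$, so the plan is essentially to match hypotheses to conclusions.

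\textbf{Plan.} I would fix $i,j\in M$ and a strategy $\pi\in D([0,T],\cU)$, and treat the two one-sided behaviors of $t\mapsto q_t^{\pi_t(i)}(i,j)$ separately.

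\emph{Right continuity at $t\in[0,T)$.} Take any sequence $\e_n\downarrow 0^+$. By the definition of $D([0,T],\cU)$, the map $s\mapsto\pi_s(i)$ is right continuous, so
\[
v_n:=\pi_{t+\e_n}(i)\longrightarrow \pi_t(i)\q\text{in }U,
\]
with $v_n\in U_{t+\e_n}(i)$ and the limit $\pi_t(i)\in U_t(i)$. The first part of Assumption (A2) then gives
\[
q_{t+\e_n}^{v_n}(i,j)\longrightarrow q_t^{\pi_t(i)}(i,j),
\]
which is precisely right continuity of $t\mapsto q_t^{\pi_t(i)}(i,j)$.

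\emph{Existence of the left limit at $t\in(0,T]$.} Take any sequence $\e_n\uparrow 0^-$. Since $\pi\in D([0,T],\cU)$, the left limit $\pi_{t-}(i):=\lim_{\e_n\uparrow 0^-}\pi_{t+\e_n}(i)$ exists in $U$; moreover Assumption (A1) ensures $U_{t-}(i)=\lim_{\e\to 0^-}U_{t+\e}(i)$ exists, so that $\pi_{t-}(i)\in U_{t-}(i)$. Setting again $v_n:=\pi_{t+\e_n}(i)\in U_{t+\e_n}(i)$ with $v_n\to \pi_{t-}(i)\in U_{t-}(i)$, the second part of Assumption (A2) yields
\[
\lim_{n\to\infty}q_{t+\e_n}^{v_n}(i,j)=q_{t-}^{\pi_{t-}(i)}(i,j),
\]
and this limit is independent of the chosen sequence because the construction in (A2) depends only on the limit of $v_n$. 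Hence the left limit of $q_{\cdot}^{\pi_{\cdot}(i)}(i,j)$ at $t$ exists.

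\textbf{Main obstacle.} There is no deep difficulty: the only subtlety is book-keeping to check that the admissibility condition is preserved in the limit, i.e.\ that the sequences $v_n=\pi_{t+\e_n}(i)$ lie in $U_{t+\e_n}(i)$ (immediate from $\pi\in L^1([0,T],\cU)$) and that their limits lie in the correct sets $U_t(i)$ or $U_{t-}(i)$ (guaranteed by (A1)). Once this is verified, the conclusion is a direct application of (A2); no use is made of the Lipschitz condition (A3).
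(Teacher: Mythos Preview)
Your argument is correct and is exactly the intended one: the paper does not give a proof of this proposition, treating it as an immediate consequence of Assumptions (A1)--(A2) and the c\`adl\`ag property of $\pi$, and your write-up simply spells out these details. The only minor remark is that the statement as printed has $\pi\in D([0,T],\cM)$, which is a typo for $D([0,T],\cU)$; you have implicitly (and correctly) read it that way.
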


 To proceed, we present some regularity results about \eqref{dynamic} first. The proof of which  can be found in \cite{Yin2012} (Page 19, Theorem 2.5).

\begin{proposition} Under Assumption {\rm (A)}, for any $\pi\in L^1([0,T],\cU) $, let \bel{BQtransition}\BP_{t_0,t_1}^\pi=\exp\left\{\int_{t_0}^{t_1}Q_s^{\pi_s}ds\right\}:=(p_{t_0,t_1}^{\pi}(i,j))_{M\times M}.\eel
	The unique solution of \eqref{dynamic} is $\mu^{\rho,\pi}_t=\rho\BP^\pi_{0,t}$,
	i.e., for each $j\in M$,  $$\mu^{\rho,\pi}_t(j)=\sum_{j=1}^m p_{0,t}^\pi(i,j)\rho(i).$$

	If $\pi\in L^1([0,T],\cU) $  for any $h\in\cM$, then
\bel{generator}\lim_{\e\rightarrow 0^+}\frac1\e\(\sum_{j=1}^mh(j)\mu_{t+\e}^{t,i,\pi}(j)- h(i)\)=\sum_{j=1}^mh(j)q^{\pi(i)}_{t}(i,j)\text{ holds for a.e. }\ t\in\dbT.\eel
 Moreover, if  $\pi\in D([0,T],\cU)$, \eqref{generator} holds for all $t\in\dbT$.
\end{proposition}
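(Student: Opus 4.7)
The plan is to treat the three assertions in order: existence/uniqueness of the matrix-exponential representation, the a.e.\ version of the generator formula for $L^1$ controls, and the upgrade to an everywhere statement under right-continuity.

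For the first part, I will exploit the linearity of \eqref{dynamic}. Under (A3), the coefficients satisfy $\sup_{i,j,v}|q^v_s(i,j)|\le K_1$, so for any $\pi\in L^1([0,T],\cU)$ the matrix-valued map $s\mapsto Q^{\pi_s}_s$ is bounded and (by (A1)--(A2) together with the Lipschitz bound in (A3)) measurable. The classical theory of linear matrix ODEs with bounded measurable coefficients then yields a unique absolutely continuous fundamental solution $\BP^\pi_{t_0,\cdot}$ satisfying $\frac{d}{dt}\BP^\pi_{t_0,t}=\BP^\pi_{t_0,t}Q^{\pi_t}_t$ with $\BP^\pi_{t_0,t_0}=I$, and this solution is precisely the time-ordered exponential in \eqref{BQtransition}. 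Multiplying on the left by $\rho$ and using uniqueness gives $\mu^{\rho,\pi}_t=\rho\BP^\pi_{0,t}$.

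For the second part, I would start from the integrated form of \eqref{dynamic} with initial datum $\d_i$ at time $t$:
$$\mu^{t,i,\pi}_{t+\e}(j)-\d_{ij}=\int_t^{t+\e}\sum_{k=1}^m\mu^{t,i,\pi}_s(k)\,q^{\pi_s(k)}_s(k,j)\,ds.$$
Multiplying by $h(j)$, summing, and dividing by $\e$, I would split the resulting integrand as
$$\sum_{j,k}h(j)\mu^{t,i,\pi}_s(k)q^{\pi_s(k)}_s(k,j)=\sum_{j}h(j)q^{\pi_s(i)}_s(i,j)+\sum_{j,k}h(j)\bigl(\mu^{t,i,\pi}_s(k)-\d_{ik}\bigr)q^{\pi_s(k)}_s(k,j).$$
The second piece is $O(\e)$ uniformly in $s\in[t,t+\e]$, since $\|\mu^{t,i,\pi}_s-\d_i\|_{\cM}\le m K_1(s-t)$ by the bound on $Q$ and Gr\"onwall, while $|q^v_s(k,j)|\le K_1$ and $h$ is bounded. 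For the first piece, the integrand $s\mapsto \sum_j h(j)q^{\pi_s(i)}_s(i,j)$ is bounded and measurable on $\dbT$, so the Lebesgue differentiation theorem gives
$$\lim_{\e\to 0^+}\frac1\e\int_t^{t+\e}\sum_j h(j)q^{\pi_s(i)}_s(i,j)\,ds=\sum_j h(j)q^{\pi_t(i)}_t(i,j)$$
for a.e.\ $t\in\dbT$, establishing \eqref{generator}.

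For the final assertion, I would observe that when $\pi\in D([0,T],\cU)$ the map $s\mapsto \pi_s(i)$ is right-continuous with left limits and $\pi_s(i)\in U_s(i)$; combining this with the joint right-continuity of $q^v_s(i,j)$ supplied by (A1)--(A2), the function $s\mapsto q^{\pi_s(i)}_s(i,j)$ is itself right-continuous at every $t$. For a right-continuous bounded function the Cesaro-type average $\frac1\e\int_t^{t+\e}(\cdot)ds$ converges to the value at $t$ for every $t$, not just almost every $t$, which upgrades \eqref{generator} to hold on all of $\dbT$. The main delicate point is the compositional right-continuity of $q^{\pi_s(i)}_s(i,j)$ in $s$: one must combine the right-continuity of $\pi_s(i)$ with the joint right-continuity in (A2) along arbitrary approaching sequences, which is exactly what the preceding proposition supplies; everything else reduces to standard bounded-coefficient ODE estimates and the Lebesgue differentiation theorem.
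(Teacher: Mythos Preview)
Your proposal is correct. The paper does not actually supply its own proof of this proposition: it simply refers the reader to \cite{Yin2012} (page 19, Theorem 2.5). Your argument is therefore a self-contained substitute rather than a competitor to anything in the text.

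A couple of minor comments. First, you correctly read the symbol $\exp\{\int_{t_0}^{t_1}Q_s^{\pi_s}\,ds\}$ as the time-ordered (Peano--Baker) exponential, i.e.\ the fundamental solution of the linear matrix ODE, rather than the literal exponential of the integrated matrix; since the $Q_s^{\pi_s}$ need not commute for different $s$, this distinction matters and your reading is the only one that makes the proposition true. Second, the measurability of $s\mapsto q_s^{\pi_s(i)}(i,j)$ in the $L^1$ case deserves one more sentence: (A3) gives Lipschitz dependence on $v$ and (A1)--(A2) give c\`adl\`ag dependence on $s$, so $(s,v)\mapsto q_s^v(i,j)$ is Carath\'eodory and the composition with the measurable $s\mapsto\pi_s(i)$ is measurable, which is what the Lebesgue differentiation step needs. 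Third, your invocation of ``the preceding proposition'' for right-continuity of $s\mapsto q_s^{\pi_s(i)}(i,j)$ is exactly right; that is precisely why the paper states that proposition just before this one. With these points noted, the argument is complete and standard.
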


Because of the Lipschitz-dependence of the transition 
rate
 (which depends on the strategies), the following lemma indicates  that the solution 
 of dynamic \eqref{dynamic}
 is Lipschitz-dependent on the strategies as well.

\begin{lemma} Under Assumption {\rm (A)},  given any two admissible strategies $\pi,\pi'\in  L^1([0,T],\cU) $ and $\rho,\gamma \in\cP$, it follows that
	\bel{estimu} d(\mu^{\rho,\pi}_t,\mu^{\gamma,\pi'}_t)\leq d(\rho,\gamma)+ \kappa_1 \int_0^td_\cU(\pi_s,\pi'_s) ds.\eel
	
\end{lemma}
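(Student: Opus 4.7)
My plan is to study the signed measure $\xi_t := \mu^{\rho,\pi}_t - \mu^{\gamma,\pi'}_t$ on $M$. Subtracting the two instances of \rf{dynamic} (for $\mu^{\rho,\pi}_t$ and $\mu^{\gamma,\pi'}_t$) and adding and subtracting $\mu^{\gamma,\pi'}_t Q_t^{\pi_t}$ yields
$$\dot\xi_t \;=\; \xi_t\, Q_t^{\pi_t} \;+\; \mu^{\gamma,\pi'}_t\bigl(Q_t^{\pi_t} - Q_t^{\pi'_t}\bigr),$$
an inhomogeneous linear ODE whose homogeneous part is generated by $Q^{\pi}$. Applying variation of constants with the transition matrix $\BP^{\pi}_{s,t}$ from \rf{BQtransition}, I obtain the representation
$$\xi_t \;=\; (\rho-\gamma)\,\BP^{\pi}_{0,t} \;+\; \int_0^t \mu^{\gamma,\pi'}_s\bigl(Q_s^{\pi_s} - Q_s^{\pi'_s}\bigr)\BP^{\pi}_{s,t}\,ds.$$

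The remaining task is to bound the distance $d(\cdot,0)$ of each term. The key observation is a contraction property: for any signed measure $x$ on $M$ and any stochastic matrix $\BP=(p_{ij})$, the triangle inequality gives $\sum_j |\sum_i x_i p_{ij}| \le \sum_i |x_i|$, so right-multiplication by $\BP^{\pi}_{\cdot,\cdot}$ does not increase the $d$-distance. This immediately bounds the contribution of the first term by $d(\rho,\gamma)$.

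For the integrand I would expand, for each $s$,
$$\sum_j \Bigl|\sum_i \mu^{\gamma,\pi'}_s(i)\bigl[q_s^{\pi_s(i)}(i,j) - q_s^{\pi'_s(i)}(i,j)\bigr]\Bigr| \;\le\; \sum_i \mu^{\gamma,\pi'}_s(i)\sum_j \bigl|q_s^{\pi_s(i)}(i,j) - q_s^{\pi'_s(i)}(i,j)\bigr|,$$
and apply the Lipschitz bound \rf{qLip} componentwise. Since both $q_s^{\pi_s(i)}(i,\cdot)$ and $q_s^{\pi'_s(i)}(i,\cdot)$ are generator rows summing to zero, the $j=i$ entry of the difference is the negative sum of the off-diagonal entries, so the inner sum is at most a dimension-dependent multiple of $\kappa_1|\pi_s(i),\pi'_s(i)|_U \le \kappa_1 d_\cU(\pi_s,\pi'_s)$. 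Using $\sum_i \mu^{\gamma,\pi'}_s(i)=1$ and integrating in $s$ yields \rf{estimu} (with the combinatorial factor $2(m-1)$ absorbed into $\kappa_1$ if one is strict about the constant).

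The one technical point to watch is justifying the variation-of-constants identity when $\pi,\pi'$ lie only in $L^1([0,T],\cU)$: one cannot differentiate pointwise everywhere, but the absolute continuity of $t\mapsto \mu_t$ granted by the preceding Proposition, together with the a.e.\ identity \rf{generator}, allows one to derive the representation by integrating the inhomogeneous ODE on $[0,t]$ rather than differentiating. No Gronwall or fixed-point argument is needed, because the ODE is linear and the driver $\mu^{\gamma,\pi'}_t(Q_t^{\pi_t}-Q_t^{\pi'_t})$ is treated as a known forcing term independent of $\xi$.
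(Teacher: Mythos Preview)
Your argument is correct and takes a genuinely different route from the paper's. You derive the variation-of-constants representation
\[
\xi_t=(\rho-\gamma)\BP^{\pi}_{0,t}+\int_0^t \mu^{\gamma,\pi'}_s\bigl(Q_s^{\pi_s}-Q_s^{\pi'_s}\bigr)\BP^{\pi}_{s,t}\,ds
\]
and bound each piece in one stroke using the $\ell^1$-contraction of stochastic matrices and the Lipschitz bound \rf{qLip}. The paper instead argues semigroup-style: it first proves the contraction $d(\mu^{\rho,\pi}_t,\mu^{\gamma,\pi}_t)\le d(\rho,\gamma)$, then shows that on a short window $[0,\delta_\varepsilon]$ the perturbation in the strategy costs at most $(1+\varepsilon)\kappa_1\int_0^{\delta_\varepsilon} d_\cU(\pi_s,\pi'_s)\,ds$, and finally patches $[0,t]$ out of such windows by a recursion and lets $\varepsilon\to0$. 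Your global Duhamel formula bypasses the $\varepsilon$--$\delta$ subdivision entirely and is cleaner; the paper's approach, on the other hand, never writes down the inhomogeneous ODE and stays closer to the probabilistic picture of transition matrices.

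One honest point you flag is real: with \rf{qLip} stated entrywise, the inner sum $\sum_j|q_s^{\pi_s(i)}(i,j)-q_s^{\pi'_s(i)}(i,j)|$ picks up a factor $2(m-1)$, so the constant in \rf{estimu} is a dimension-dependent multiple of $\kappa_1$ rather than $\kappa_1$ itself. The paper's short-time estimate glosses over exactly the same step, so this is not a defect of your method relative to theirs; it is a harmless normalization of $\kappa_1$ that affects neither proof's validity nor any downstream use of the lemma.
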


\begin{proof}
Noting that $\sum_{j=1}^m p_t^\pi(i,j)=1$,
 simple calculation yields
\bel{qcontraction}\ba{ll}d(\mu^{\rho,\pi}_t,\mu^{\gamma,\pi}_t)\ad= d(\rho\BP_t^\pi,\gamma\BP_t^\pi)\\
\ns\ad\leq\sum_{i=1}^M\sum_{j=1}^M|\rho(i)-\gamma(i)|p^\pi_{t}(i,j)\\
\ns\ad\leq d(\rho,\gamma).\ea\eel
By \eqref{qLip}, for any $\varepsilon>0$, there exists a $\d_\varepsilon>0$ small enough (independent of $t$) such that  when $0\leq t\leq \d_\varepsilon$,
\bel{qcontraction-2}\ba{ll}d(\mu^{\rho,\pi}_t,\mu^{\rho,\pi'}_t)\ad= d(\rho\BP_t^{\pi},\rho\BP_t^{\pi'})\\
\ns\ad \leq \sum_{j=1}^M\left|\sum_{i=1}^M\rho(i)[p_t^\pi(i,j)-p_t^{\pi'}(i,j)]\right|\\
\ns\ad\leq (1+\e )\kappa_1d_\cU(\pi_s,\pi'_s) ds. \ea\eel
By \eqref{qcontraction},
\bel{qcontraction-3}\ba{ll}d(\mu^{\rho,\pi}_t,\mu^{\gamma,\pi'}_t)\ad \leq d(\mu^{\rho,\pi}_t,\mu^{\rho,\pi'}_t)+d(\mu^{\rho,\pi'}_t,\mu^{\gamma,\pi'}_t)\\
\ns\ad\leq (1+\e )\k_1 \int_0^td_\cU(\pi_s,\pi'_s) ds+d(\rho,\gamma).\ea\eel

For any $t\in [\d_\varepsilon,T]$, using \eqref{qcontraction-2} and \eqref{qcontraction-3},  we have
$$\ba{ll}d(\mu^{\rho,\pi}_t,\mu^{\gamma,\pi'}_t)\ad\leq d(\mu^{\rho,\pi}_{t-\d_\varepsilon},\mu^{\gamma,\pi'}_{t-\d_\e })+(1+\e )\k_1\int_{t-\d_\varepsilon}^td_\cU(\pi_s,\pi'_s) ds \ea $$
By simple recursions and the arbitrariness of $\varepsilon>0$, one can easily see that
$$d(\mu^{\rho,\pi}_t,\mu^{\gamma,\pi'}_t)\leq d(\rho,\gamma)+ \kappa_1 \int_0^td_\cU(\pi_s,\pi'_s)ds.
$$
The proof is complete.

\end{proof}

\section{Time-inconsistent Distribution-dependent Control Problem}\label{sec:tcp}
In this section, we  introduce  the control problem
 that we are interested in and give the  definition of time-inconsistent  equilibrium. Moreover, we  prove the existence and uniqueness of the  time-inconsistent mean-field equilibrium under some general conditions. This section is divided into several subsections.
 
\subsection{Definition of Time-inconsistent Mean-field Equilibrium}
In this subsection, we mainly introduce the definition of  time-inconsistent mean-field equilibrium for our control problem. We begin by defining the time-inconsistent distribution dependent cost functional first.

 Let the running cost and terminal cost rates be maps defined as
$$\left\{\ba{ll}\ad f:\dbT\times \dbT\times M\times U\times\cP\mapsto\dbR^+ \text{ by }(\t,t,i,v,\rho)\mapsto f_{\t,t}(i,v;\rho), \\
\ns\ad g:\dbT\times M\times\cP\mapsto \dbR^+\text{ by } (\t,i,\rho)\mapsto g_{\t}(i;\rho).\ea\right.$$
In the paper, we are concerned with the following  distribution-dependent time-inconsistent cost functional with non-exponential discounting factor $\t$,
\bel{cost}\BJ_{\t,t}(\rho,\pi):=\int_t^T
\sum_{j=1}^mf_{\t,s}(j,\pi_s(j);\mu_s^{t,\rho,\pi})
\mu_s^{t,\rho,\pi}(j)ds+\sum_{j=1}^mg_\t(j;\mu_T^{t,\rho,\pi})\mu_T^{t,\rho,\pi}(j)\eel
and the corresponding  value functional
\bel{value}\BV_t(\rho,\pi):=\BJ_{t,t}(\rho,\pi).\eel

Here the $\BJ$ and $\BV$ are distribution-dependent because $f$ and $g$ depends on the distribution term as well. If $f$ and $g$ are distribution-independent, such problem reduces to a classical time-inconsistent control problem. 

As alluded to 
in the introduction, it is impossible to find a optimal strategy because of the time-inconsistency. Thus  we look for the {\it time-inconsistent distribution-dependent equilibrium}.
The definition is given as follows.

\begin{definition}\label{timfe}
	{\rm Given {\it a priori} $\nu_\dbT\in C([0,T],\cP)$,  write
		$$J_{\t,t}(i,\phi_t[\pi];\nu_\dbT):=
		\int_t^T\sum_{j=1}^mf_{\t,s}(j,u_s(j);\nu_s)
\mu_s^{t,i,\pi}(j)ds+\sum_{j=1}^mg_\t(j;\nu_T)\mu_T^{t,i,\pi}(j)$$
		and		$$V_t(i,\phi_t[\pi];\nu_\dbT)=J_{t,t}(i,\phi_t[\pi];\nu_\dbT).$$
		A pair $(\rho,\pi)\in\cP\times D([0,T],\cU)$  is called a {\it time-inconsistent equilibrium} for our distribution-dependent cost if the following local-optimality holds,
		\bel{localoptimal}\ba{ll}\ad\limsup_{\e\rightarrow 0^+}\frac{V_t(i,\pi^\e\oplus\phi_{t+\e}[\pi];\mu_\dbT^{\rho,\pi})-V_t(i,\phi_t[\pi];\mu_\dbT^{\rho,\pi})}\e\geq0, \\
		\ns\ad\qq\qq\qq\qq\qq\qq\qq\text{ for any }(t,i,\pi^\e)\in\dbT\times M\times D([t,t+\e),\cU),\ea\eel
		where the perturbed strategy $\pi^\e\oplus\phi_{t+\e}[\pi]\in D([t,T],\cU) $ is defined as
		$$\(\pi^\e\oplus\phi_{t+\e}[\pi]\)_s(i):=\left\{\ba{ll}\ad\pi_s(i),\q t+\e\leq s\leq T,\\ [1mm]
		\ns\ad \pi^\e_s(i),\q t\leq s< t+\e.\ea\right.$$

}
\end{definition}

\begin{remark}\label{rem2step}
{\rm  We note the following facts.
\begin{itemize}
\item[(1)] Along the solution curve $\mu_\dbT^{\rho,\pi}$, we have $$\BJ_{\t,t}(\mu^{\rho,\pi}_t,\pi)=\sum_{j=1}^m\mu^{\rho,\pi}_t(j)
J_{\t,t}(j,\phi_t[\pi];\mu^{\rho,\pi}_\dbT)\text{ and }\BV_t(\mu^{\rho,\pi}_t,\pi)=\sum_{j=1}^m\mu^{\rho,\pi}_t(j)V_{t}(j,\phi_t[\pi];\mu^{\rho,\pi}_\dbT).$$

\item[(2)] From the definition of a time-inconsistent equilibrium $(\rho,\pi)$, we can see that there are two steps to verify.

\begin{itemize}
\item[(a)] Find the unique solution $\mu_\dbT^{\rho,\pi}$ of  the dynamic \eqref{dynamic} using $(\rho,\pi)$.

\item[(b)] Using $\nu_\dbT=\mu_\dbT^{\rho,\pi}$  as a priori curve, find the distribution-independent time-inconsistent equilibrium strategy which verifies the local-optimality.

\end{itemize}
If such two-step recursion is fulfilled, one can see that $(\rho,\pi)$  is a time-inconsistent mean-field equilibrium. Thus we will adopt the  fixed-point  theory to prove the existence and uniqueness of the time-inconsistent distribution-dependent equilibrium.
\end{itemize}
}
\end{remark}

To proceed, we first
present the following example to avoid
some possible confusions.

\begin{example}
	{\rm Suppose that the terminal cost in \eqref{cost} is given by
	$$g_\t(i,\rho)=(i-\sum_{j=1}^mj\rho(j))^2.$$
 Then
 $\sum_{i=1}^mg_\t(i,\mu_T)\mu_T(i)$ is the variance of the distribution  $\mu_T$. In this case, if we let
 $$\tilde g_\t(i,\rho)=i^2-(\sum_{j=1}^mj\rho(j))^2.$$
$\sum_{i=1}^m\tilde g_\t(i,\mu_T)\mu_T(i)$ is the variance of the distribution  $\mu_T$ as well. Thus  $g$ and $\tilde g$ give the same terminal functional in $\BV$. While in the process of deriving the fixed-point, the  terminal conditions in the Hamilton-Jacobi equation will be different. As a consequence, the time-inconsistent equilibrium will be different as well. Therefore it is natural to question which of those is the correct one to use. In fact, in  Section \ref{sec:mfg}, we introduce  a time-inconsistent mean-field game with infinite-many equivalent players.  We can clearly identify the correct forms of $f$ and $g$ from the problem itself. Thus, generally speaking, the forms of $f$ or $g$ depend on the 
model used.
 }
\end{example}

\subsection{Distribution-independent Equilibrium
with  a {\it   
Priori} $\nu_\dbT$}
In this subsection, we derive the process to find a (classical) time-inconsistent equilibrium if   $\nu_\dbT$ in the cost functional is given {\it a priori}.  This is the step (b) from Remark \ref{rem2step}. Essentially, we are finding a time-inconsistent equilibrium of which  the idea is taken from
\cite{Yong2012b}. The main effort is to find a time-inconsistent HJ equation  derived from an $N$-player game. While in our paper, we present the HJ equation and verify the required local-optimality directly. Further details regarding the derivation of 
of the time-inconsistent HJ equation can be found in \cite{Yong2012b}.  We first  present some assumptions.

Given $u\in\cU$, define an operator $\cQ^u_t:\cM\mapsto \cM$ by
$$\cQ^u_t[h](i):=\sum_{j=1}^mq^{u(i)}_t(i,j)h(j).$$

\noindent{\bf Assumption (B)}

\begin{itemize}
 \item[(B1)] There exists  $\bar f_{\t,t}:M\times\cP\mapsto\dbR^+$ and $\Psi_t:M\times U\mapsto\dbR^+$ such that
		$$f_{\t,t}(i,v;\rho)=\bar f_{\t,t}(i;\rho)+\Psi_t(i,v),$$
and that		$\Psi_t(i,\cdot)$ is  continuous on $U_t(i)$ with
		$$\sup_{i\in M}\sup_{0\leq t\leq T}\sup_{v\in U_t(i)}\Psi_t(i,v)<K_1.$$
	
	\item[(B2)] There exist constants $K_2, K_3\geq0$ such that $$\left\{\ba{ll}\ad 0\leq \bar f_{\t,t}(i,\rho),~g_\t(i,\rho)\leq K_2,\\ [2mm]
	\ns\ad  |\bar f_{\t,t}(i,\rho)-\bar f_{\t,t}(i,\rho')|+|g_\t(i,\rho)-g_\t(i,\rho')|\leq K_3 d(\rho,\rho').\ea\right.$$
		
\item[(B3)]
There exists a map $\psi_t:  \cM\mapsto \cU$ such that for any $h\in\cM$,
$$\psi_t[h](i):=\argmin_{v\in U_t(i)}\[\Psi_{t}(i,v)+\sum_{j=1  }^mh(j)q_t^v(i,j)\].$$ Moreover, For any $h,h'\in\cM$,
\bel{Lippsi}d_\cU(\psi_t[h],\psi_t[h']) \leq \kappa_2\Vert h-h'\Vert_\cM
\q\text{ and }\q\lim_{\e\rightarrow 0}|\psi_{t+\e}[h](i),\psi_t[h](i)|_U=0.\eel
\end{itemize}

\begin{remark}
	{\rm The definition of $\psi_t$ induces a map $\psi:C([0,T],\cM)\mapsto L^1([0,T],\cU) $ by point-wise by defining
$$(\psi[\theta])_t(i)=\psi_t[\theta_t](i).$$
Moreover, if $\theta\in  D([0,T],\cM)$, by Assumption (B), $\psi[\theta]\in D([0,T],\cU)$.
}
\end{remark}

\begin{example}
	{\rm  Here we present an example where Assumption (B) is true.  Let $$q_t^{v}(i,j)=\alpha_t(i,j)+\beta_t(j) v$$
and the action set $U=[-1,1]$. Suppose that $\Psi_t(i,v)= \frac12 v^2$, $\alpha_t(i,j)\geq 0$ if $i\neq j$ and
$$\sum_{j=1}^m\alpha_t(i,j)=\sum_{j=1}^m\beta_t(j)=0.$$
One can easily see that $U_t(i)\neq\emptyset$ since $0\in U_t(i)$. If $v, v'\in U_t(i)$, i.e.,
$$\alpha_{t}(i,j)+\beta_t(j) v,~\alpha_{t}(i,j)+\beta_t(j) v'\geq 0\text{ for }i\neq j,$$
then for any $\lambda\in[0,1]$
$$\alpha_t(i,j)+\beta_t(j)[\lambda v+(1-\lambda v')]\geq 0\text{ for } i\neq j.$$
This proves that $$\lambda v+(1-\lambda )v'\in U_t(i),\text{ for any }\lambda \in[0,1],$$
i.e., $U_t(i)$ is a convex subset. It  can also  be seen that $U_t(i)$ is closed. Thus $U_t(i)$ is a closed subinterval of $[-1,1]$, and as a result,
$$\psi_t(i)=\argmin_{v\in U_t(i)}\[\frac{v^2} 2+v\sum_{j=1}^mh(j)\beta_t(j)\]$$
is well-defined by the strong convexity. Moreover, if $\beta_t(i)$ is right-continuous with respect to $t$, \eqref{Lippsi} holds directly. }
\end{example}

Now we are ready to introduce the time-inconsistent HJ equation for our problem.
Given $\nu_\dbT\in C([0,T],\cP)$, consider the following time-inconsistent Hamilton-Jacobi equation,
\bel{HJB}\left\{\ba{ll}\ad\partial_t\Theta_{\t,t}(i)+f_{\t,t}(i,\pi_t(i);\nu_t)+\cQ_{t}^{\pi_t}[\Theta_{\t,t}](i)=0\\
\ns\ad \pi_t(i)=\psi_t[\Theta_{t,t}](i),~\Theta_{\t,T}(i)=g_{\t}(i;\nu_T)\ea \text{ for any }i\in M. \right.\eel

Define $\BA_{t_0,t_1}^\pi:\cM\mapsto\cM$ by
$$(\BA_{t_0,t_1}^\pi h)(i)=\sum_{j=1}^mh(j)p_{t_0,t_1}^\pi(i,j)$$
where $p_{t_0,t_1}^\pi(j)$ is defined in \eqref{BQtransition}. It is easy to see that \eqref{HJB} is equivalent to
\bel{HJB-2}\left\{\ba{ll}\ad\Theta_{\t,t}=\int_t^T\BA^\pi_{t,s}f_{\t,s}(\cdot,\pi_s(\cdot);\nu_s)ds+\BA^\pi_{t,T}g_\t(\cdot;\nu_T)\\
\ns\ad \pi_t(i)=\psi_t[\Theta_{t,t}](i),~\Theta_{\t,t}(i)=g_\t(i).\ea\right.\eel
\ms

In view of the form of the HJ equation, formally we can conclude that the solution   $\Theta\in \sM[0,T]^2$ where
$$\ba{ll}\ad \sM([0,T]^2:=\{\Theta:(\t,t,i)\in\dbT^2\times M\mapsto\Theta_{\t,t}(i)\in\dbR\big|~\Theta_{\t,\cdot}\in C([0,T],\cM) \text{ for each }\t\in\dbT\}.\ea$$
Note that it is not required  $\Theta_{t,t}(i)$  being continuous in $t$, or even measurable. Let us 
look at the following example.

\begin{example}{\rm Let $\dbT_1$ be a non-measurable subset of $[0,T]$. Let
	$$\Theta_{\t,t}=\BI(\t\in \dbT_1).$$
	Obviously  $ \Theta_{\t,t}$ is continuous with respect $t$ for each fixed $\t$. While $\Theta_{t,t}=\BI(t\in \dbT_1)$ is not  a measurable function.}
\end{example}

Therefore, to guarantee the regularity of $\Theta_{t,t}$, we assume the following conditions hold.

\noindent{\bf Assumption (C)}
	$\bar f_{\t,t}(i), ~g_\t(i) $ are right continuous with left limit with respect to $\t$ for each $(t,i)\in \dbT\times M$.

With such an assumption, we have the following theorem.

\begin{theorem}\label{mth-1} Under Assumptions {\rm (A), (B), (C)},
given any $\nu_\dbT\in C([0,T],\cP)$,	there exists a solution pair $(\Theta,\pi)\in\sM[0,T]^2\times D([0,T],\cU)$ for \eqref{HJB} with $\Theta_{t,t}(i)$ is right-continuous with left-limit for each $i\in M$. As a consequence,
$$\pi_t(i)=\psi_t(i,\Theta_{t,t})\in D([0,T],\cU).$$
\end{theorem}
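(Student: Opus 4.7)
The strategy is to realize $\pi$ as a fixed point of a map $\Gamma: D([0,T],\cU)\to D([0,T],\cU)$ that sends a candidate strategy $\pi$ to the pointwise minimizer $t\mapsto \psi_t[\Theta^\pi_{t,t}]$, where $\Theta^\pi$ is the unique solution of \eqref{HJB} once $\pi$ is frozen inside. Given $\pi$, equation \eqref{HJB} reduces to a linear backward ODE in $\Theta_{\t,\cdot}$ for each fixed $\t$, so the integral representation \eqref{HJB-2} uniquely defines $\Theta^\pi_{\t,t}$. The only outstanding constraint of the theorem is then the self-consistency $\pi_t(i)=\psi_t[\Theta^\pi_{t,t}](i)$, which is exactly what a fixed point of $\Gamma$ encodes.

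First I would check that $\Gamma$ is well defined. For each $\t$, boundedness of $f$ and $g$ from (B2) together with the continuity of $t\mapsto\BP^\pi_{t,s}$ coming from the preceding proposition yield $\Theta^\pi_{\t,\cdot}\in C([0,T],\cM)$, uniformly bounded by $K_2(1+T)$, so $\Theta^\pi\in\sM[0,T]^2$. The crucial point is that the diagonal $t\mapsto\Theta^\pi_{t,t}$ is RCLL. Splitting
\bea
\Theta^\pi_{t+\e,t+\e}-\Theta^\pi_{t,t}=[\Theta^\pi_{t+\e,t+\e}-\Theta^\pi_{t,t+\e}]+[\Theta^\pi_{t,t+\e}-\Theta^\pi_{t,t}],
\eea
the second bracket vanishes as $\e\to 0^+$ by continuity of $\Theta^\pi_{\t,\cdot}$ in the second variable, while the first bracket is handled by dominated convergence applied to the integral in \eqref{HJB-2}, using Assumption (C) to ensure $\bar f_{\t,s}$ and $g_\t$ are RCLL in $\t$. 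This yields right-continuity with left limits of $t\mapsto\Theta^\pi_{t,t}$, and then \eqref{Lippsi} gives $\Gamma(\pi)=\psi_\cdot[\Theta^\pi_{\cdot,\cdot}]\in D([0,T],\cU)$.

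The second step produces the fixed point by a backward-in-time contraction. Differencing \eqref{HJB-2} for two strategies $\pi,\pi'$, I expect an estimate of the form
\bea
\Vert\Theta^\pi_{t,t}-\Theta^{\pi'}_{t,t}\Vert_\cM\leq C\int_t^T d_\cU(\pi_s,\pi'_s)\,ds,
\eea
with $C=C(K_1,K_2,K_3,\k_1,T)$ and the modulus of continuity of $\Psi$ on the relevant bounded set of actions. The contributions come from (i) the difference of the kernels $\BP^\pi_{t,s}-\BP^{\pi'}_{t,s}$, controlled via (A3) as in \eqref{estimu}; (ii) the running-cost difference $\Psi_s(i,\pi_s(i))-\Psi_s(i,\pi'_s(i))$; and (iii) the $\nu$-dependent terms, which differ only through the kernels. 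Combined with \eqref{Lippsi} this gives $d_\cU(\Gamma(\pi)_t,\Gamma(\pi')_t)\leq C\k_2\int_t^T d_\cU(\pi_s,\pi'_s)\,ds$. Choosing $\d$ with $C\k_2\d<1$, $\Gamma$ is a strict contraction on $D([T-\d,T],\cU)$ under the sup metric and delivers a unique fixed point $\pi^\star$ there. I then repeat on $[T-2\d,T-\d]$, using the RCLL profile $\t\mapsto\Theta^{\pi^\star}_{\t,T-\d}$ as new terminal data, and since $\d$ can be chosen uniformly, finitely many iterations exhaust $[0,T]$; the concatenated strategy is the desired $\pi$.

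The main obstacle is the joint-regularity step, namely RCLL of $t\mapsto\Theta^\pi_{t,t}$. The two time variables play quite different roles (discounting parameter versus current time), and the example preceding the theorem warns that without (C) the diagonal can fail even to be measurable. Careful separation of the increment into a ``fixed $t$, moving $\t$'' piece (where (C) is used) and a ``fixed $\t$, moving $t$'' piece (where continuity of the integral representation is used) is what rescues the situation. A secondary technicality is ensuring that the constant $C$ in the contraction estimate does not depend on the particular subinterval being treated, so that a single mesh $\d$ works throughout the backward iteration.
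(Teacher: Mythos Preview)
Your proposal is correct and follows essentially the same route as the paper: freeze the feedback, solve the resulting linear backward equation via the integral representation \eqref{HJB-2}, verify RCLL of the diagonal $t\mapsto\Theta_{t,t}$ by splitting the increment into a $\t$-variation piece (handled by Assumption (C)) and a $t$-variation piece (handled by continuity in \eqref{HJB-2}), derive a contraction estimate of the form $C\int_t^T(\cdot)\,ds$, and close by backward subdivision with uniform step $\d$. The only cosmetic difference is that the paper runs the Picard iteration on the diagonal $\theta=\Theta_{\cdot,\cdot}\in D([0,T],\cM)$ rather than on the strategy $\pi\in D([0,T],\cU)$; since $\pi=\psi[\theta]$ and your $\Gamma(\pi)=\psi[\Theta^\pi_{\cdot,\cdot}]$, the two fixed-point maps are conjugate via $\psi$ and yield the same argument.
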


\begin{proof}
	The proof is based on the fixed-point theory. Since $\nu_\dbT$ is given  {\it a priori}, we omit $\nu_\dbT$ in the proof. Given two $\theta,\theta'\in  D([0,T],\cM)$, let $(\Theta,\pi),(\Theta',\pi')$ be the solutions of
$$\left\{\ba{ll}\ad\partial_t\Theta_{\t,t}(i)+f_{\t,t}(i,\pi_t(i))+\cQ_{t}^{\pi_t}[\Theta_{\t,t}](i)=0\\
\ns\ad \pi=\psi[\theta],~ \Theta_{\t,T}(i)=g_{\t}(i)\ea \text{ for any }i\in M, \right.$$
and
$$\left\{\ba{ll}\ad\partial_t\Theta'_{\t,t}(i)+f_{\t,t}(i,\pi'_t(i))+\cQ_{t}^{\pi'_t}[\Theta'_{\t,t}](i)=0\\
\ns\ad\pi'=\psi[\theta'],~ \Theta_{\t,T}(i)=g_{\t}(i)\ea \text{ for any }i\in M. \right.$$

By \eqref{Lippsi},
$$\ba{ll}\ds\int_t^Td_\cU(\pi_s(i),\pi'_s(i))ds\ad\leq \kappa_3\int_t^T\Vert\theta_s-\theta_s'\Vert_\cM ds.\ea$$
and we have the contraction inequality
\bel{Thetacontraction}\ba{ll}\ad\|\Theta_{\t,t}-\Theta'_{\t,t}\|_\cM\\
\ns\ad\q\leq \int_t^T\|\BA^\pi_{t,s}f_{\t,s}(\cdot,\pi_s(\cdot))-\BA^{\pi'}_{t,s}f_{\t,s}(\cdot,\pi'_s(\cdot))\|_\cM ds+
\|\BA^\pi_{t,T}g_\t-\BA^{\pi'}_{t,T}g_\t\|_\cM\\
\ns\ad\q\leq K\int_t^Td_\cU(\pi_s,\pi'_s) ds\leq \kappa_3K\int_t^T\Vert\theta_s-\theta_s'\Vert_\cM ds.\ea \eel

Now we show that if $\theta\in  D([0,T],\cM)$, $\Theta_{t,t}$ is also right continuous with left limit. Then we note that
$$\ba{ll}\ad\|\Theta_{t+\e,t}-\Theta_{t+\e,t+\e}\|_\cM\\
\ns\ad\q\leq \|\Theta_{t+\e,t+\e}-\Theta_{t,t+\e}\|_\cM
+\|\Theta_{t,t+\e}-\Theta_{t,t}\|_\cM\\
\ns\ad \q\leq \int_t^T\|\BA^\pi_{t,s}f_{t+\e,s}(\cdot,\pi_s(\cdot))-\BA^{\pi}_{t,s}f_{t,s}(\cdot,\pi_s(\cdot))\|_\cM ds+
\|\BA^\pi_{t,T}g_{t+\e}-\BA^{\pi}_{t,T}g_{t}\|_\cM\\
\ns\ad\qq+\|\Theta_{t,t+\e}-\Theta_{t,t}\|_\cM.\ea$$
Using Assumption (C) and letting $\e\rightarrow 0^+$,
$$\lim_{\e\rightarrow0^+}\Vert\Theta_{t+\e,t+\e}-\Theta_{t,t}\Vert_\cM=0.$$
This proves that $\Theta_{t,t}$ is right continuous  with respect to $t$. Similarly, $\Theta_{t,t}$ has a left limit with respect to $t$.

Now we are ready to prove the existence and uniqueness of the solution by adopting the fixed point theory. Given $\theta^{(1)}\in  D([0,T],\cM)$, let $\Theta^{(1)}$ be the solution of \eqref{HJB}. Let $\theta^{(2)}_t(i)=\Theta^{(1)}_t(i)$ for each $(t,i)$. By our claim, we know that $\theta^{(2)}\in D([0,T],\cM)$. Then let $\Theta^{(2)}$ be the solution of \eqref{HJB} using $\theta^{(2)}$. Recursively repeating such process, one can get a sequence of functions $\{(\theta^{(n)},\Theta^{(n)}\}$. By \eqref{Thetacontraction}, we have
$$\ba{ll}\ad\sup_{t\leq s\leq T}\sup_{0\leq \t\leq T}\Vert\Theta^{(n)}_{\t,s}-\Theta^{(n+1)}_{\t,s}\Vert_\cM\\
\ns\ad\q \leq \kappa_3 K(T-t)\int_t^T\Vert\theta^{(n)}_s-\theta_s^{(n+1)}\Vert_\cM ds\\
\ns\ad\q\leq  \kappa_3 K(T-t)\sup_{t\leq s\leq T}\sup_{0\leq \t\leq T}\Vert\Theta^{(n-1)}_{\t,s}-\Theta^{(n)}_{\t,s}\Vert_\cM.\ea$$
As a result, $T-t<\d$ for small $\d>0$, there exists a limit pair $(\theta,\Theta)$ with $\Theta\in\sM[T-\d,T]^2$  and    $$\int_{T-\d}^T|\theta_s(i)|ds<\infty.$$
 For the whole time interval $[0,T]$, one can divide $[0,T]$ into $[T,T-\d]$, $[T-\d,T-2\d], \ldots$. Recursively from $T$ to $0$, one can see that there exists a unique solution pair $(\theta,\Theta)\in  D([0,T],\cM)\times\sM[0,T]^2$.  Moreover we have $\pi=\psi[\theta]\in D([0,T],\cU)$.
	
\end{proof}

\begin{proposition}
	For any $\nu_\dbT\in C([0,T],\cP)$, there exists a uniform constant
 $\kappa_2$ (independent of $\nu_\dbT$) such that
	\bel{bounduniformTheta}0\leq \Theta_{\t,t}(i)\leq \kappa_2.\eel
\end{proposition}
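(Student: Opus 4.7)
The plan is to read the bound directly off the integral form \eqref{HJB-2} of the time-inconsistent HJ equation, exploiting that $\mathbf{A}^\pi_{t,s}$ is a stochastic transition operator (its kernel $p^\pi_{t,s}(i,j)$ is nonnegative and row-sums equal $1$) and that the cost rates are uniformly bounded by Assumption (B).

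Concretely, first I would note that for any $h \in \mathcal{M}$,
\[
0 \le \min_j h(j) \le (\mathbf{A}^\pi_{t,s} h)(i) = \sum_{j=1}^m h(j)\, p^\pi_{t,s}(i,j) \le \max_j h(j) \le \|h\|_\mathcal{M},
\]
so $\mathbf{A}^\pi_{t,s}$ preserves nonnegativity and is a contraction on $(\mathcal{M}, \|\cdot\|_\mathcal{M})$.

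Next, I would invoke the splitting $f_{\tau,s}(i,v;\rho) = \bar f_{\tau,s}(i;\rho) + \Psi_s(i,v)$ from (B1), together with the bounds $0 \le \bar f_{\tau,s}(\cdot;\rho) \le K_2$ and $0 \le \Psi_s(\cdot,v) \le K_1$ from (B1)--(B2), and $0 \le g_\tau(\cdot;\nu_T) \le K_2$ from (B2). Since these bounds are independent of the choice of $\nu_\mathbb{T}$, the resulting estimate on $\Theta$ will be uniform in $\nu_\mathbb{T}$.

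Finally, applying $\mathbf{A}^\pi_{t,s}$ to these uniformly bounded nonnegative integrands in \eqref{HJB-2} and using the contraction property above gives, for every $(\tau,t,i) \in \mathbb{T}^2 \times M$,
\[
0 \le \Theta_{\tau,t}(i) \le \int_t^T (K_1 + K_2)\, ds + K_2 \le (K_1+K_2)T + K_2,
\]
so the claim holds with $\kappa_2 := (K_1+K_2)T + K_2$. There is no real obstacle here; the only subtlety is to be careful that the constants $K_1, K_2$ from (B1)--(B2), and hence $\kappa_2$, do not depend on the a priori curve $\nu_\mathbb{T}$, which is immediate from the way (B) is stated.
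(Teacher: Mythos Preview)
Your argument is correct and follows essentially the same approach as the paper: the paper's proof simply invokes Assumption (B2) together with the integral representation \eqref{HJB-2}, and you have spelled out precisely those details (including the explicit constant $\kappa_2=(K_1+K_2)T+K_2$). If anything, you are slightly more careful than the paper, since bounding $f_{\tau,s}=\bar f_{\tau,s}+\Psi_s$ also uses the bound on $\Psi$ from (B1), not only (B2).
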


\begin{proof}
By Assumption (B2) and the representation of $\Theta$ in \eqref{HJB-2}, 
\eqref{bounduniformTheta} holds with some uniform constant $\k_2$ independent of the choice of $\nu_\dbT$.
\end{proof}

Next, we prove that the strategy from \eqref{HJB} verifies the local-optimality \eqref{localoptimal}.

\begin{theorem}\label{optimality} Under Assumptions {\rm (A), (B), (C)},
	given a tuple  $(\nu_\dbT,\Theta,\pi)$, where $(\Theta,\pi)$ solves \eqref{HJB} with given priori $\nu_\dbT$, it follows that
	$$\ba{ll}\ad\liminf_{\e\rightarrow 0^+}\frac{V_t(i,\pi^\e\oplus\phi_{t+\e}[\pi];\nu_\dbT)-V_t(i,\phi_t[\pi];\nu_\dbT)}\e\geq0\\
	\ns\ad\qq\qq\qq\qq\text{ for any }(t,i,\pi^\e)\in \dbT\times M \times D([t,t+\e),\cU)\ea$$
	with $\pi^\e_t=u$ for any $\e>0$.
\end{theorem}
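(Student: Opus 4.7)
The plan is to combine a Feynman--Kac identification of the value function with a first-order expansion of the perturbed value, and then invoke the pointwise minimality built into the HJ equation \eqref{HJB}.

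The starting point is $V_t(i,\phi_t[\pi];\nu_\dbT)=\Theta_{t,t}(i)$: using the integral form \eqref{HJB-2} together with the representation $\mu_s^{t,i,\pi}(j)=p_{t,s}^\pi(i,j)$ of the preceding proposition, one reads off $\Theta_{\t,t}(i)=J_{\t,t}(i,\phi_t[\pi];\nu_\dbT)$ for every $\t$. Write $\tilde\pi^\e:=\pi^\e\oplus\phi_{t+\e}[\pi]$. Splitting the cost $V_t(i,\tilde\pi^\e;\nu_\dbT)$ at $s=t+\e$, using the Markov flow property on $[t+\e,T]$ (where $\tilde\pi^\e\equiv\pi$) and the same Feynman--Kac identity applied at $t+\e$, one obtains
$$V_t(i,\tilde\pi^\e;\nu_\dbT)=\int_t^{t+\e}\sum_j f_{t,s}(j,\pi^\e_s(j);\nu_s)\mu_s^{t,i,\tilde\pi^\e}(j)\,ds+\sum_k\mu_{t+\e}^{t,i,\tilde\pi^\e}(k)\Theta_{t,t+\e}(k).$$
I would then subtract $\Theta_{t,t}(i)$, apply the HJ equation in integrated form to $\Theta_{t,t+\e}(k)-\Theta_{t,t}(k)$, and combine with the forward Kolmogorov identity applied to $\sum_k\Theta_{t,t}(k)\mu_{t+\e}^{t,i,\tilde\pi^\e}(k)-\Theta_{t,t}(i)$; this recasts the difference as
$$V_t(i,\tilde\pi^\e;\nu_\dbT)-V_t(i,\phi_t[\pi];\nu_\dbT)=\int_t^{t+\e}\sum_j\mu_s^{t,i,\tilde\pi^\e}(j)H_s^{\pi^\e_s}(j)\,ds-\sum_k\mu_{t+\e}^{t,i,\tilde\pi^\e}(k)\int_t^{t+\e}H_s^{\pi_s}(k)\,ds+o(\e),$$
where $H_s^w(k):=f_{t,s}(k,w(k);\nu_s)+\cQ_s^w[\Theta_{t,t}](k)$ and the $o(\e)$ absorbs the error from replacing $\Theta_{t,s}$ by $\Theta_{t,t}$.

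Dividing by $\e$, estimate \eqref{estimu} forces $\mu_s^{t,i,\tilde\pi^\e}(j)=\d_{ij}+O(\e)$ uniformly on $[t,t+\e]$, so only $j=i$ contributes at leading order. The second piece therefore tends to $H_t^{\pi_t}(i)$ by right-continuity of $\pi$, $\nu$ and the data. For the first piece, the pointwise definition of $\psi_s$ as an argmin in (B3) gives, for each $s\in[t,t+\e)$,
$$H_s^{\pi^\e_s}(i)\ge\bar f_{t,s}(i;\nu_s)+\Psi_s\bigl(i,\psi_s[\Theta_{t,t}](i)\bigr)+\cQ_s^{\psi_s[\Theta_{t,t}]}[\Theta_{t,t}](i),$$
and the right side is right-continuous at $s=t$ with value $H_t^{\pi_t}(i)$, via \eqref{Lippsi} and $\psi_t[\Theta_{t,t}]=\pi_t$. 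Time-averaging preserves the inequality, so the $\liminf$ of the first piece is also at least $H_t^{\pi_t}(i)$, and the two contributions cancel to give $\liminf\ge0$.

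The principal subtlety is that $\pi^\e$ is an arbitrary $D([t,t+\e),\cU)$-curve subject only to $\pi^\e_t=u$, so $H_s^{\pi^\e_s}(i)$ may oscillate as $\e\to0^+$ and admits no direct pointwise limit. The pointwise minimality inequality above is what rescues the argument: once $\pi^\e_s$ is replaced inside the integral by the $s$-dependent minimizer $\psi_s[\Theta_{t,t}]$, the remaining integrand is determined by the data and by the fixed profile $\Theta_{t,t}$ alone, and right-continuity at $s=t$, guaranteed by \eqref{Lippsi}, suffices to pass to the limit in the time average.
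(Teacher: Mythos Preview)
Your proof is correct and follows the same overall architecture as the paper's: the Feynman--Kac identification $\Theta_{\t,t}(i)=J_{\t,t}(i,\phi_t[\pi];\nu_\dbT)$, the splitting of the perturbed cost at $t+\e$, and the reduction to a comparison of Hamiltonians via the generator. The one substantive difference is the order in which the minimality condition and the limit are invoked. The paper evaluates the limit of the $\pi^\e$-contribution directly as $f_{t,t}(i,u(i);\nu_t)+\cQ_t^u[\Theta_{t,t}](i)$, appealing to right-continuity of $\pi^\e$ at $t$ together with $\pi^\e_t=u$, and only afterwards applies the argmin property of $\psi_t$. You instead apply the pointwise minimality in (B3) \emph{inside} the $s$-integral, replacing the uncontrolled integrand $H_s^{\pi^\e_s}(i)$ by the deterministic minorant $H_s^{\psi_s[\Theta_{t,t}]}(i)$ before passing to the limit. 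This buys robustness against the $\e$-dependence of the family $\{\pi^\e\}$: the paper's direct evaluation tacitly needs the time-averages $\e^{-1}\int_t^{t+\e}\Psi_s(i,\pi^\e_s(i))\,ds$ and $\e^{-1}\int_t^{t+\e}q_s^{\pi^\e_s(i)}(i,j)\,ds$ to converge to their values at $(t,u(i))$, which can fail if $\pi^\e$ is genuinely allowed to vary with $\e$ on $[t,t+\e)$. Your argument sidesteps this issue and delivers the $\liminf\ge 0$ conclusion under the bare hypothesis $\pi^\e\in D([t,t+\e),\cU)$ with $\pi^\e_t=u$, which is exactly the subtlety you flag in your final paragraph.
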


\begin{proof}
	Note that
	$$\ba{ll}\ad J_{t,t}(i,\pi^\e\oplus\phi_{t+\e}[\pi],\nu_\dbT)\\
	\ns\ad\q=\int_t^{t+\e}\sum_{j=1}^mf_{t,s}(j,\pi_s^\e(i);\nu_s)\mu_s^{t,i,\pi^\e}(j)ds+\sum_{j=1}^mJ_{t,t+\e}(j,\phi_{t+\e}[\pi];\nu_{\dbT})\mu_{t+\e}^{t,i,\pi^\e}(j).\ea$$
	Therefore
	$$\ba{ll}\ad J_{t,t}(i,\pi^\e\oplus\phi_{t+\e}[\pi],\nu_\dbT)-J_{t,t}(i,\phi_{t}[\pi],\nu_\dbT)\\
	\ns\ad\q=\int_t^{t+\e}\sum_{j=1}^m\(f_{t,s}(j,\pi^\e_s(i);\nu_s)\mu_s^{t,i,\pi_\e}(j)-f_{t,s}(j,\pi_s(j);\nu_s)\mu_s^{t,i,\pi}(j)\)ds\\
	\ns\ad\qq+\sum_{j=1}^mJ_{t,t+\e}(j,\phi_{t+\e}[\pi];\nu_{\dbT})\(\mu_{t+\e}^{t,i,\pi^\e}(j)-\mu_{t+\e}^{t,i,\pi}(j)\)\ea$$
	
	Since $\pi^\e$ and $\pi$ are right-continuous, it follows that \bel{local-1}\ba{ll}\ad\limsup_{\e\rightarrow0}\frac1\e\[\int_t^{t+\e}\sum_{j=1}^m\(f_{t,s}(j,\pi_s^\e(j);\nu_s)\mu_s^{t,i,\pi^\e}(j)-f_{t,s}(j,\pi(s);\nu_s)\mu_s^{t,i,\pi}(j)\)ds\]\\
	\ns\ad \q=f_{t,t}(i,u(i);\nu_t)-f_{t,t}(i,\pi_t(i);\nu_t).\ea\eel
	Note that $\Theta_{\t,t}(i)=J_{\t,t}(i,\phi_t[\pi];\nu_\dbT)$, we have
	$$\ba{ll}\ad\sum_{j=1}^mJ_{t,t+\e}(j,\phi_{t+\e}[\pi];\nu_{\dbT})\mu_{t+\e}^{t,i,\pi^\e}(j)-J_{t,t+\e}(i,\phi_{t+\e}[\pi];\nu_{\dbT})\\
	\ns\ad=\sum_{j=1}^m\Theta_{t,t+\e}(j)[\d_i\BP   _{t,t+\e}^{\pi^\e}](j)-\Theta_{t,t}(i)-\(\Theta_{t,t+\e}(i)-\Theta_{t,t}(i)\)\\
	\ns\ad =\sum_{j=1}^m\Theta_{t,t}(j)[\d_i\BP_{t,t+\e}^{\pi^\e}](j)-\Theta_{t,t}(i)+\sum_{j=1}^m(\Theta_{t,t}(j)-\Theta_{t,t}(j))\([\d_i\BP_{t,t+\e}^{\pi^\e}](j)-\d_i(j)\).\ea$$
Since $\pi^\e,\pi$ are right-continuous and by Assumption (A),
	\bel{local-2}\left\{\ba{ll}\ad\lim_{\e\rightarrow0}\frac1\e\(\sum_{j=1}^mJ_{t,t+\e}(j,\phi_{t+\e}[\pi];\nu_{\dbT})\mu_{t+\e}^{t,i,\pi^\e}(j)-\Theta_{t,t}(i)\)=\cQ^{u}_{t}[\Theta_{t,t}](i)\\ [2mm]
	\ns\ad \lim_{\e\rightarrow0}\frac1\e\(\sum_{j=1}^mJ_{t,t+\e}(j,\phi_{t+\e}[\pi];\nu_{\dbT})\mu_{t+\e}^{t,i,\pi}(j)-\Theta_{t,t}(i)\)=\cQ^{\pi_t}_{t}[\Theta_{t,t}](i).\ea\right.\eel

As a result,  by \eqref{local-1}, \eqref{local-2} and the definition of $\pi_t(i)=\psi_t[\Theta_{t,t}](i)$,
$$\ba{ll}\ad\liminf_{\e\rightarrow 0}\frac{J_{t,t}(i,\pi^\e\oplus\phi_{t+\e}[\pi],\nu_\dbT)-J_{t,t}(i,\phi_t[\pi],\nu_\dbT)}\e\\
\ns\ad\q= f_{t,t}(i,u(i);\nu_t)-f_{t,t}(i,\pi_t(i);\nu_t)+\cQ^u_{t}[\Theta_{t,t}](i)-\cQ^{\pi_t}_{t}[\Theta_{t,t}](i)\\
\ns\ad\q\geq0.\ea$$
The proof is complete.
	
\end{proof}

\subsection{Time-inconsistent Distribution-dependent Equilibrium}

In this section,  we  
use
the following two-step recursion to prove the existence and uniqueness of the time-inconsistent mean-field equilibrium.

Step-1: Given a $\nu_\dbT^{(1)}\in C([0,T],\cP)$ with initial $\nu^{(1)}_0=\rho$, let $(\Theta^{(1)},\pi^{(1)})$ be the solution pair of \eqref{HJB}.

Step-2: Using the strategy $\pi^{(1)}$, let $\nu^{(2)}_\dbT$ be the solution of the dynamic equation \eqref{dynamic} with initial $\nu^{(2)}_0=\rho$.

By recursively repeating Step-1 and Step-2, we can get a sequence of 
triples
$(\nu_\dbT^{(n)},\Theta^{(n)},\pi^{(n)})$. Now we aim to prove such a sequence is convergent in an appropriate sense. We need the following  lemma.

\begin{lemma}
	Under Assumptions {\rm (A), (B), (C)}, given $\nu_\dbT,\tilde \nu_\dbT\in C([0,T],\cP)$, let $(\Theta,\pi)$ and $(\tilde \Theta,\tilde\pi)$ be the solutions of \eqref{HJB} respectively. Then there exists a constant $\kappa_3>0$ such that
	\bel{cauchytheta}\sup_{0\leq t\leq T}\sup_{0\leq \t\leq T}\Vert\Theta_{\t,t}-\tilde\Theta_{\t,t}\Vert_\cM \leq \k_3\sup_{0\leq t\leq T}d(\nu_t,\tilde\nu_t).\eel
\end{lemma}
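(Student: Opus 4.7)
The plan is to combine the integral representation \eqref{HJB-2} with the Lipschitz bound on $\psi$ from (B3) in order to derive a Gronwall inequality for the auxiliary quantity $\Phi(t) := \sup_{0 \le \t \le T}\Vert\Theta_{\t,t} - \tilde\Theta_{\t,t}\Vert_\cM$. The starting observation is that $\pi_s = \psi_s[\Theta_{s,s}]$ and $\tilde\pi_s = \psi_s[\tilde\Theta_{s,s}]$, so (B3) immediately yields the self-referential estimate $d_\cU(\pi_s, \tilde\pi_s) \le \kappa_2 \Vert\Theta_{s,s} - \tilde\Theta_{s,s}\Vert_\cM \le \kappa_2 \Phi(s)$, and so every bound I produce in terms of $d_\cU(\pi_s, \tilde\pi_s)$ can be fed back into the Gronwall machine.

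Subtracting the two integral representations, the difference $\Theta_{\t,t} - \tilde\Theta_{\t,t}$ splits naturally into three groups of contributions: (i) the change in $\nu$ appearing as an argument of $\bar f_{\t,s}$ and $g_{\t}$, controlled by $K_3 \sup_s d(\nu_s, \tilde\nu_s)$ via (B2); (ii) the change in the transition kernels $\BA^\pi_{t,s}$ versus $\BA^{\tilde\pi}_{t,s}$, which, using the Lipschitz property of $q$ from (A3) together with the uniform bounds on $f,g$, is bounded by $C_1 \int_t^T d_\cU(\pi_s, \tilde\pi_s)\,ds$ by the same argument the preceding lemma uses for $\mu$; and (iii) the running-cost terms $\Psi_s(j, \pi_s(j)) - \Psi_s(j, \tilde\pi_s(j))$. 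Contribution (iii) is the subtle piece and, I expect, the main obstacle, because (B1) only guarantees continuity of $\Psi_s(i,\cdot)$, not Lipschitz continuity in $v$.

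To overcome this, I would exploit the very optimality that defines $\psi$. Since $\pi_s(i)$ minimizes $\Psi_s(i,v) + \sum_j q_s^v(i,j)\Theta_{s,s}(j)$ over $v \in U_s(i)$, evaluating this inequality at the admissible point $v = \tilde\pi_s(i)$ yields
$$\Psi_s(i, \pi_s(i)) - \Psi_s(i, \tilde\pi_s(i)) \le \sum_{j=1}^m \bigl[q_s^{\tilde\pi_s(i)}(i,j) - q_s^{\pi_s(i)}(i,j)\bigr]\Theta_{s,s}(j).$$
Using $\sum_j q_s^v(i,j) = 0$ to rewrite the right-hand side as $\sum_{j \ne i}\bigl[q_s^{\tilde\pi_s(i)}(i,j) - q_s^{\pi_s(i)}(i,j)\bigr](\Theta_{s,s}(j) - \Theta_{s,s}(i))$, and then combining (A3) with the uniform bound $\Vert\Theta_{s,s}\Vert_\cM \le \kappa_2$ supplied by the preceding proposition, the right-hand side is at most $C_2 d_\cU(\pi_s, \tilde\pi_s)$; the symmetric inequality applied to $\tilde\Theta_{s,s}$ furnishes the matching lower bound. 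Hence $|\Psi_s(i, \pi_s(i)) - \Psi_s(i, \tilde\pi_s(i))| \le C_2 d_\cU(\pi_s, \tilde\pi_s)$.

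Assembling (i), (ii), and the bound just obtained for (iii), substituting $d_\cU(\pi_s, \tilde\pi_s) \le \kappa_2 \Phi(s)$, and taking the supremum over $\t \in [0,T]$ produces
$$\Phi(t) \le C \int_t^T \Phi(s)\,ds + C' \sup_{0 \le s \le T} d(\nu_s, \tilde\nu_s),$$
and the backward Gronwall lemma then delivers \eqref{cauchytheta} with the uniform constant $\kappa_3 := C' e^{CT}$.
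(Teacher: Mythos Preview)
Your argument is correct and follows essentially the same route as the paper's: subtract the integral representations \eqref{HJB-2}, split the difference, use $d_\cU(\pi_s,\tilde\pi_s)\le\kappa_2\Vert\Theta_{s,s}-\tilde\Theta_{s,s}\Vert_\cM$ from (B3), and apply a backward Gronwall inequality. In fact your treatment of contribution~(iii) is more careful than the paper's, which simply asserts the final pre-Gronwall estimate without isolating the $\Psi$-term; your optimality trick for bounding $|\Psi_s(i,\pi_s(i))-\Psi_s(i,\tilde\pi_s(i))|$ via (A3) and the uniform bound on $\Theta$, rather than assuming a Lipschitz condition on $\Psi_s(i,\cdot)$ that (B1) does not provide, fills a genuine gap in the paper's presentation.
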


\begin{proof}
	Note that
	$$\ba{ll}\ad\|\Theta_{\t,t}-\tilde \Theta_{\t,t}\|_\cM\\
	\ns\ad\q\leq \int_t^T\Vert\BA^\pi_{t,s}f_{\t,s}(\cdot,\pi_s(\cdot);\nu_s)-\BA^{\tilde\pi}_{t,s}f_{\t,s}(\cdot,\tilde\pi_s(\cdot);\tilde\nu_s)\|_\cM ds+\|\BA^\pi_{t,T}g_\t(\cdot;\nu_T)-\BA^{\tilde \pi}_{t,T}g_\t(\cdot;\tilde\nu_T)\|_\cM\\
	\ns\ad\q\leq \int_t^T\Vert\BA^\pi_{t,s}f_{\t,s}(\cdot,\pi_s(\cdot);\nu_s)-\BA^{\tilde\pi}_{t,s}f_{\t,s}(\cdot,\pi_s(\cdot);\tilde\nu_s)\|_\cM ds\\
	\ns\ad\q\q+\int_t^T\Vert\BA^{\tilde \pi}_{t,s}f_{\t,s}(\cdot,\pi_s(\cdot);\nu_s)-\BA^{\tilde\pi}_{t,s}f_{\t,s}(\cdot,\tilde\pi_s(\cdot);\tilde\nu_s)\|_\cM ds\\
	\ns\ad\q\q+\|\BA^\pi_{t,T}g_\t(\cdot;\nu_T)-\BA^{\tilde \pi}_{t,T}g_\t(\cdot;\nu_T)\|_\cM+\|\BA^{\tilde \pi}_{t,T}g_\t(\cdot;\nu_T)-\BA^{\tilde \pi}_{t,T}g_\t(\cdot;\tilde\nu_T)\|_\cM\\
	\ns\ad\leq K(T+1)\sup_{t\leq s\leq T}d(\nu_s,\tilde\nu_s) +K\int_t^T\|\Theta_{s,s}-\tilde\Theta_{s,s}\|_\cM ds.\ea$$
	Using Grownwall's inequality, one can see that there exists a $\kappa_3 >0$
	$$\sup_{0\leq \t\leq T}\sup_{0\leq t\leq T}\Vert\Theta_{\t,t}-\tilde\Theta_{\t,t}\Vert_\cM \leq \k_3\sup_{0\leq t\leq T}d(\nu_t,\tilde\nu_t).$$
\end{proof}

Now we are ready to present  the main theorem of this paper.

\begin{theorem}  Under Assumption {\rm (A), (B), (C)}, if $\kappa_1\kappa_2\kappa_3T<1$, there exists a unique time-inconsistent distribution-dependent equilibrium $(\rho,\pi^*)\in\cP\times D([0,T],\cM)$.
	
\end{theorem}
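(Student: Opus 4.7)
The strategy is to set up the two-step recursion from Step-1/Step-2 as a self-map $\Phi$ on $C([0,T],\cP)$ and apply Banach's fixed-point theorem. Concretely, for any $\nu_\dbT \in C([0,T],\cP)$ with $\nu_0=\rho$, let $(\Theta[\nu],\pi[\nu])$ denote the unique solution pair of \eqref{HJB} produced by Theorem \ref{mth-1}, and define
\[
\Phi(\nu)_t := \mu_t^{\rho,\pi[\nu]}, \qquad t\in\dbT.
\]
By Lemma on \eqref{estimu} together with the regularity $\pi[\nu]\in D([0,T],\cU)$, this map sends $C([0,T],\cP)$ into itself and preserves the initial value $\rho$, so it is a bona fide self-map on the complete metric space $(C_\rho([0,T],\cP),\sup_t d(\cdot,\cdot))$ of curves starting at $\rho$.

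\textbf{Contraction estimate.} The hard work is already done: I simply chain the three Lipschitz bounds already established. For $\nu,\tilde\nu\in C([0,T],\cP)$, the previous lemma \eqref{cauchytheta} gives
\[
\sup_{\t,t}\Vert\Theta[\nu]_{\t,t}-\Theta[\tilde\nu]_{\t,t}\Vert_\cM \;\le\; \kappa_3\sup_t d(\nu_t,\tilde\nu_t).
\]
Combined with the Lipschitz estimate \eqref{Lippsi} on $\psi_t$, this yields, for every $t\in\dbT$,
\[
d_\cU(\pi[\nu]_t,\pi[\tilde\nu]_t) \;=\; d_\cU\bigl(\psi_t[\Theta[\nu]_{t,t}],\psi_t[\Theta[\tilde\nu]_{t,t}]\bigr) \;\le\; \kappa_2\kappa_3\sup_s d(\nu_s,\tilde\nu_s).
\]
Applying \eqref{estimu} with identical initial datum $\rho$ then gives
\[
d\bigl(\Phi(\nu)_t,\Phi(\tilde\nu)_t\bigr)\;\le\;\kappa_1\int_0^t d_\cU(\pi[\nu]_s,\pi[\tilde\nu]_s)\,ds\;\le\;\kappa_1\kappa_2\kappa_3 T\sup_s d(\nu_s,\tilde\nu_s),
\]
so the hypothesis $\kappa_1\kappa_2\kappa_3 T<1$ makes $\Phi$ a strict contraction. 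Banach's fixed-point theorem produces a unique $\nu^*\in C([0,T],\cP)$ with $\nu^*_0=\rho$ and $\nu^*=\Phi(\nu^*)=\mu^{\rho,\pi^*}$ for $\pi^*:=\pi[\nu^*]\in D([0,T],\cU)$.

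\textbf{Verification of the equilibrium property.} Finally I have to check that $(\rho,\pi^*)$ satisfies Definition \ref{timfe}. The fixed-point relation $\nu^*_\dbT=\mu_\dbT^{\rho,\pi^*}$ is exactly the step (a) in Remark \ref{rem2step}. For step (b), since $(\Theta[\nu^*],\pi^*)$ solves the HJ equation \eqref{HJB} with \emph{a priori} curve $\nu^*_\dbT=\mu_\dbT^{\rho,\pi^*}$, Theorem \ref{optimality} delivers the local-optimality inequality
\[
\liminf_{\e\to 0^+}\frac{V_t(i,\pi^\e\oplus\phi_{t+\e}[\pi^*];\mu_\dbT^{\rho,\pi^*})-V_t(i,\phi_t[\pi^*];\mu_\dbT^{\rho,\pi^*})}{\e}\;\ge\;0
\]
for all $(t,i,\pi^\e)$, which is \eqref{localoptimal}. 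Uniqueness of the pair $(\rho,\pi^*)$ follows from uniqueness of the fixed point $\nu^*$ of $\Phi$ and uniqueness in Theorem \ref{mth-1}, since any equilibrium $(\rho,\pi)$ automatically induces a fixed point $\mu_\dbT^{\rho,\pi}$ of $\Phi$ via the two-step characterization in Remark \ref{rem2step}.

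\textbf{Main obstacle.} Every piece is already in place, so there is no genuine obstacle beyond carefully assembling the contraction. The only subtle point worth double-checking is that the fixed-point iteration is run in the space of curves with \emph{prescribed} initial value $\rho$ (so that the two initial distributions in \eqref{estimu} coincide and the $d(\rho,\gamma)$ term drops out); otherwise the contraction factor would be $1+\kappa_1\kappa_2\kappa_3 T$ instead of $\kappa_1\kappa_2\kappa_3 T$, and the smallness condition would not suffice.
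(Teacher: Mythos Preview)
Your proposal is correct and follows essentially the same approach as the paper: both chain the three Lipschitz estimates \eqref{cauchytheta}, \eqref{Lippsi}, and \eqref{estimu} to obtain the contraction factor $\kappa_1\kappa_2\kappa_3 T$, and then invoke Theorem~\ref{optimality} to verify the equilibrium property. The only cosmetic difference is that the paper runs the contraction on the $\Theta^{(n)}$ iterates and shows the sequence is Cauchy directly, whereas you package the two-step recursion as a self-map $\Phi$ on $C_\rho([0,T],\cP)$ and cite Banach's theorem; the underlying estimates and constants are identical.
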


\begin{proof} By \eqref{cauchytheta}, we have
	$$\ds\sup_{0\leq s\leq T}\sup_{0\leq \t\leq T}\Vert\Theta^{(3)}_{\t,s}-\Theta^{(2)}_{\t,s}\Vert_\cM\leq \kappa_3\sup_{0\leq s\leq T}d(\nu^{(3)}_s,\nu^{(2)}_s).$$
	By \eqref{estimu} and Assumption (B), we have
	$$\ba{ll}\ds\sup_{0\leq s\leq T}d(\nu^{(3)}_s,\nu^{(2)}_s)ds\ad\leq \kappa_1\int_0^Td_\cU(\pi^{(2)}_s,\pi^{(1)}_s) ds\\
	\ns\ad\leq \kappa_1\kappa_2\int_0^T\Vert\Theta_{t,t}^{(2)}-\Theta_{t,t}^{(1)}\Vert_\cM ds\\
	\ns\ad\leq \kappa_1\kappa_2T\sup_{0\leq s\leq T}\sup_{0\leq \t\leq T}\Vert\Theta^{(2)}_{\t,s}-\Theta^{(1)}_{\t,s}\Vert_\cM.\ea$$
	Thus we have
		$$\ds\sup_{0\leq s\leq T}\sup_{0\leq \t\leq T}\Vert\Theta^{(3)}_{\t,s}-\Theta^{(2)}_{\t,s}\Vert_\cM\leq \kappa_1\kappa_2\k_3T\sup_{0\leq s\leq T}\sup_{0\leq \t\leq T}\Vert\Theta^{(2)}_{\t,s}-\Theta^{(1)}_{\t,s}\Vert_\cM.$$
		If $\kappa_1\kappa_2\k_3T<1$, we can easily see that $\Theta_{\t,t}^{(n)}$ is a Cauchy sequence with limit $\Theta^*_{\t,t}$ and $\nu^{(n)}_\dbT$ is a Cauchy sequence in $C([0,T],\cP)$ with limit $\nu_\dbT$. At the same time, we can get a pair $(\rho,\pi^*)$ where
		$$\theta^*_t(i):=\Theta^*_{t,t}(i)\text{ and }\pi^*=\psi[\theta^*]\in D([0,T],\cM).$$
		By Lemma \ref{optimality}, $(\rho,\pi^*)$ is  the time-inconsistent distribution-dependent equilibrium.
\end{proof}

\begin{remark}
	{\rm We have established the time-inconsistent mean-field equilibrium for general cases. If the functions $J$ and $V$ in the cost function $\BJ$ in \eqref{cost} are independent of the distribution, such problem reduces to the classical time-inconsistent control problem for Markov chains. Then by Theorem \ref{mth-1}, one can find the   time-inconsistent equilibrium $\pi^*$  which is independent of the choice $\rho$. Moreover, the assumption on the constants $\k_1,\k_2,\k_3$ and $T$ are no longer needed.
}
\end{remark}

\section{Time-inconsistent Infinite-player  Mean-field Game}\label{sec:mfg}
In this section, we will introduce an infinite-player game of which the mean-field equilibrium is equivalent to the time-inconsistent distribution-dependent equilibrium defined in Definition \ref{timfe}.

\ms

On  a complete probability measure space $\{\Omega, \BP,\cF\}$, consider a finite-state controlled Markov with state space $M$ and transition probability
$$\BP(X_{t+\D t}=j|X_t=i;v)= \left\{\ba{ll}\ad q_t^v(i,j)\D t+o(\D t),\\
\ns\ad 1+q^v_t(i,i)\D t+o(\D t)\ea\right. $$
where  $v$ is taken in some action space $U$. Denote $\dbN=\{1,\dots, N\}$ and $\dbT=[0,T]$.

Let $\{X^n_\dbT:n\in\dbN\}$ be the dynamics of  $N$-independent players with the same transition as  $\{X_\dbT\}$ and differential initials. Let  the empirical measure process $\rho^{\dbN,-k}_\dbT$ be
$$\rho^{\dbN,-k}_t(i;\omega)=\frac1{N-1}\sum_{n\neq k}\BI[X^n_t(\omega)=i].$$
Let the set of admissible strategies $ L^1([0,T],\cU) $ be similarly defined as before and $D^\dbN([0,T],\cU)$  be the set of all admissible strategies for $N$ players.

A classical mean-field game problem is to find a Nash equilibrium $\pi^\dbN=\pi^1\oplus\cdots\oplus\pi^N\in \cU^\dbN[0,T]$ such that
\bel{Nash} \hat J^k_{t}(i,\phi_t[\pi^k];\rho_\dbT^{\dbN,-k})\leq \hat J^k_{t}(i,\phi_t[\pi^\e];\rho_\dbT^{\dbN,-k})\text{ for any } \pi^\e\in L^1([0,T],\cU)  \eel
 and $\hat J^k_t$ is the cost functional for player $k$ defined as
\bel{Nash-2} \hat J^k_{t}(i,\phi_t[\pi^k];\rho_\dbT^{\dbN,-k})=\dbE\[\int_t^T \hat f^k_{s}(X^k_s,\pi^k_s(X^k_s);\rho^{\dbN,-k}_s)ds+\hat g^k(X^k_T;\rho^{\dbN,-k}_T)\big|X^k_t=i,\pi^\dbN\]\eel
for some appropriate $\hat f^k$ and $\hat g^k$.

Different from \eqref{Nash-2},
we suppose the cost function for each player is in the same form corresponding with their dynamics respectively
  with an additional non-exponential discounting factor $\t$ in the form of
$$J^k_{\t,t}(i,\phi_t[\pi^k];\rho_\dbT^{\dbN,-k})=\dbE\[\int_t^T f_{\t,s}(X^k_s,\pi^k_s(X^k_s);\rho^{\dbN,-k}_s)ds+g_\t(X^k_T;\rho^{\dbN,-k}_T)\big|X^k_t=i,\pi\].$$
for some appropriate $ f$ and $ g$. Here every player is assumed to make their decisions according to the same time-inconsistent cost functional. Because of the time-inconsistency, it is impossible   to find the equilibrium strategy which verifies \eqref{Nash}. Thus for such a game, we aim to find a time-inconsistent $N$-player strategy in the following sense.

\begin{definition}\label{nashequi}{\rm  An $N$-player  strategy $\pi^\dbN=\pi^1\oplus\cdots\oplus\pi^N\in D^\dbN([0,T],\cU)$ is called a {\it time-inconsistent equilibrium} if
	$$\ba{ll}\ad\liminf_{\e\rightarrow0^+}\frac{ J^k_{t,t}(i,\pi^\e\oplus\phi_{t+\e}[\pi^k];\rho_\dbT^{\dbN,-k})-J^k_{t,t}(i,\phi_t[\pi^k];\rho_\dbT^{\dbN,-k})}{\e}\geq 0\\
	\ns\ad\qq\qq\qq\qq\qq\text{ for any }k\in\dbN\text{ and } \pi^\e\in D([t,t+\e),\cU).\ea$$
}
\end{definition}

The $N$-player equilibrium can be understood in the following way. Consider a player $\ell$,
if it is assumed
that the strategies of the other players are known from the equilibrium, the  strategy  of player $\ell$
finds in the time-inconsistent control problem, coincides with  player $\ell$'s
strategy  determined in the equilibrium.
The above equilibrium essentially indicates that with all the strategies of the other players fixed, the $\ell$-player's strategy is locally optimal.

Let $N\rightarrow\infty$, i.e., the number of the players tends to infinity, and suppose that  $\rho_0^{\dbN,-k}\rightarrow \gamma$ for any $k$. It can be seen that all the players are equivalent in such game. Thus we can conclude that every player should obey the same strategy 
$\pi$. By the law of large numbers, $\rho_\dbT^{\dbN,-k}$ converges to the curve  $\mu_\dbT^{\gamma,\pi}$  determined by
$$\frac{d\mu_t}{dt}=\mu_tQ_t^{\pi_t} \text{ with }\mu_0=\gamma.$$

In this case the equilibrium is fully determined by $(\gamma,\pi)$ since every player is equivalent in such time-inconsistent mean-field game. It is not difficult to see that $(\gamma,\pi)$ coincides with that in Definition \ref{timfe}. In this sense, 
we can call the equilibrium defined in Definition \ref{timfe} a time-inconsistent mean-field equilibrium too.

\section{Concluding Remarks}\label{sec:cr}

We have explored the time-inconsistent distribution-dependent control problem for  continuous-time controlled Markov chains. Due to the time-inconsistency and distribution-dependence of the cost functional, the problems have some unique interesting features. We show that the equilibrium we find is essentially equivalent to the equilibrium of an infinite-player game with time-inconsistent distribution-dependent cost. Moreover, if the cost function is in the classical form, the time-inconsistent mean-field equilibrium coincides with the time-inconsistent equilibrium for distribution-independent model.

It appears to be possible to improve the results from several angles. For example, can we consider similar problem if  the state space is countable?  Can time delay be incorporated in the setup.
All of these deserve further investigations.

	\end{document}